\newtheorem{theorem}{Theorem}[section]
\newtheorem{lemma}[theorem]{Lemma}
\newtheorem{proposition}[theorem]{Proposition}
\theoremstyle{definition}
\newtheorem{definition}[theorem]{Definition}
\newtheorem{example}[theorem]{Example}
\theoremstyle{remark}
\newtheorem{remark}[theorem]{Remark}
\newtheorem*{remark*}{Remark}
\newcommand{\ra}[1]{\renewcommand{\arraystretch}{#1}}
\numberwithin{equation}{section}
\numberwithin{figure}{section}
\begin{document}

\title{The geometric realization of a normalized set-theoretic Yang-Baxter homology of biquandles}

\author{Xiao Wang}

\author{Seung Yeop Yang}

\email[Xiao Wang]{wangxiaotop@jlu.edu.cn}

\email[Seung Yeop Yang]{seungyeop.yang@knu.ac.kr}

\address{Department of Mathematics, Jilin University, Changchun, 130012, China}

\address{Department of Mathematics, Kyungpook National University, Daegu, 41566, Republic of Korea}

\begin{abstract}
Biracks and biquandles, which are useful for studying the knot theory, are special families of solutions of the set-theoretic Yang-Baxter equation. A homology theory for the set-theoretic Yang-Baxter equation was developed by Carter, Elhamdadi, and Saito in order to construct knot invariants.
In this paper, we construct a normalized (co)homology theory of a set-theoretic solution of the Yang-Baxter equation. We obtain some concrete examples of non-trivial $n$-cocycles for Alexander biquandles. For a biquandle $X,$ its geometric realization $BX$ is discussed, which has the potential to build invariants of links and knotted surfaces. In particular, we demonstrate that the second homotopy group of $BX$ is finitely generated if the biquandle $X$ is finite.
\end{abstract}

\keywords{set-theoretical solution of Yang-Baxter equation, biquandle, normalized set-theoretic Yang-Baxter homology, biquandle space}

\subjclass[2010]{55N35, 55Q52, 57Q45}

\maketitle

\section{Introduction}
The Yang-Baxter equation has played an important role in various fields such as quantum group theory, braided categories, and low-dimensional topology since it was first introduced independently in a study of theoretical physics by Yang \cite{Yan} and statistical mechanics by Baxter \cite{Bax}. In particular, since the discovery of the Jones polynomial \cite{Jon} in 1984, it has been extensively studied in knot theory\footnote{It is known that a certain solution of the Yang-Baxter equation gives rise to the Jones polynomial \cite{Jon, Tur}.}. As a homological approach, Carter, Elhamdadi, and Saito \cite{CES} defined a (co)homology theory for set-theoretic Yang-Baxter operators, from which they provided a method to generate link invariants, and further developments were made by Przytycki \cite{Prz}\footnote{As homology theories of set-theoretic Yang-Baxter operators, they coincide as shown in \cite{PW}.}. Meanwhile, Joyce \cite{Joy} and Matveev \cite{Mat} independently introduced a self-distributive algebraic structure, called a quandle, which satisfies axioms motivated by the Reidemeister moves, and it has been generalized as a biquandle. Quandles and biquandles are solutions of the set-theoretic Yang-Baxter equation, which have been used to define homotopical and homological invariants of knots and links \cite{Nos1, Nos2, SYan, CES, CJKLS, CEGN}. This paper describes the study of a normalized homology theory of a set-theoretic solution of the Yang-Baxter equation and its geometric realization.

\subsection{Preliminary}
Let $k$ be a commutative ring with unity and $X$ be a set. We denote by $V$ the free $k$-module generated by $X.$ Then, a $k$-linear map $R:V \otimes V \rightarrow V \otimes V$ is called a \emph{pre-Yang-Baxter operator} if it satisfies the equation of the following maps $V \otimes V \otimes V \rightarrow V \otimes V \otimes V:$
$$(R \otimes \text{Id}_{V}) \circ (\text{Id}_{V} \otimes R) \circ (R \otimes \text{Id}_{V})=(\text{Id}_{V} \otimes R) \circ (R \otimes \text{Id}_{V}) \circ (\text{Id}_{V} \otimes R).$$
We call a pre-Yang-Baxter operator $R$ a \emph{Yang-Baxter operator} if it is invertible.\\

The classification of the solutions of the Yang-Baxter equation has been actively studied. Following the study by Drinfel$'$d \cite{Dri}, the set-theoretic solutions of the Yang-Baxter equation have been the focus of various studies \cite{EG, ESS, LYZ1, LYZ2, WX}.

\begin{definition}
For a given set $X,$ a function $R:$ $X \times X \to X\times X$ satisfying the following equation (called a \emph{set-theoretic Yang-Baxter equation})
$$(R\times \textrm{Id}_{X})\circ (\textrm{Id}_{X}\times R)\circ (R\times \textrm{Id}_{X})=(\textrm{Id}_{X}\times R)\circ (R\times \textrm{Id}_{X})\circ (\textrm{Id}_{X}\times R)$$
is called a \emph{set-theoretic pre-Yang-Baxter operator} or a \emph{set-theoretic solution of the Yang-Baxter equation}. In addition, if $R$ is invertible, then we call $R$ a \emph{set-theoretic Yang-Baxter operator}.
\end{definition}

Special families known as \emph{biracks} and \emph{biquandles} are strongly related to the knot theory. Their precise definitions are as follows:

\begin{definition}\label{Definition 1.2}
For a given set $X,$ let $R$ be a set-theoretic Yang-Baxter operator denoted by $$R(A_{1},A_{2})=(R_{1}(A_{1},A_{2}),R_{2}(A_{1},A_{2}))=(A_{3},A_{4}),$$ where $A_{i}\in X$ ($i=1,2,3,4$), and $R_{j}: X \times X \rightarrow X$ ($j=1,2$) are binary operations. We consider the following conditions:
\begin{enumerate}
\item For any $A_{1},A_{3}  \in X,$ there exists a unique $A_{2} \in X$ such that $R_{1}(A_{1},A_{2}) = A_{3}.$\\ In this case, $R_{1}$ is \emph{left-invertible}.

\item For any $A_{2},A_{4} \in  X,$ there exists a unique $A_{1} \in X$ such that $R_{2}(A_{1},A_{2}) = A_{4}.$\\ In this case, $R_{2}$ is \emph{right-invertible}.

\item For any $A_{1} \in X,$ there is a unique $A_{2} \in X$ such that $R(A_{1},A_{2})=(A_{1},A_{2}).$

\end{enumerate}
The algebraic structure $(X, R_{1}, R_{2})$ is called a \emph{birack} if it satisfies the conditions $(1)$ and $(2).$ A birack is a \emph{biquandle} if the condition $(3)$ is also satisfied.
\end{definition}

\begin{remark}
The condition $(3)$ in Definition \ref{Definition 1.2} implies that for any $A_{2} \in X,$ there is a unique $A_{1} \in X$ such that $R(A_{1},A_{2})=(A_{1},A_{2}).$ See Remark $3.3$ in \cite{CES}.
\end{remark}

\begin{example}
\begin{enumerate}
\item Let $C_{n}$ be the cyclic rack of order $n,$ i.e., the cyclic group $\mathbb{Z}_{n}$ of order $n$ with the operation $i * j = i+1$ (mod $n$). Then the function $R:X \times X \rightarrow X \times X$ defined by $$R(i,j)=(R_{1}(i,j), R_{2}(i,j))=(j+1, i-1)$$ forms a set-theoretic Yang-Baxter operator. Moreover, $(C_{n}, R_{1}, R_{2})$ is a biquandle, called a \emph{cyclic biquandle}.
\item \cite{CES} Let $k$ be a commutative ring with unity $1$ and with units $s$ and $t$ such that $(1-s)(1-t)=0$.
    Then the function $R:k \times k \rightarrow k \times k$ given by $$R(a,b)=(R_{1}(a,b), R_{2}(a,b))=((1-s)a+sb,ta+(1-t)b)$$ is a set-theoretic Yang-Baxter operator, and $(k, R_{1}, R_{2})$ forms a biquandle, called an \emph{Alexander biquandle}.
    For example, let $k=\mathbb{Z}_{m}$ with units $s$ and $t$ such that $m=|(1-s)(1-t)|,$ then the function $R$ defined as above forms a set-theoretic Yang-Baxter operator and $\mathbb{Z}_{m;s,t}:=(\mathbb{Z}_{m},R_{1},R_{2})$ is a biquandle.
\end{enumerate}
\end{example}

\vspace{0.5cm}
\section{Normalized homology of a set-theoretic solution of the Yang-Baxter equation}\label{section2}

In this section, we study a normalized homology theory for set-theoretic solutions of the Yang-Baxter equation, defined in a similar way as to obtain the quandle homology \cite{CJKLS} from the rack homology \cite{FRS1,FRS2}. We consturct concrete examples of non-trivial $n$-cocycles for the Alexander biquandles $\mathbb{Z}_{m;s,t}.$

First, we review the homology theory for the set-theoretic Yang-Baxter equation based on \cite{CES}.
For a set $X,$ let $R:X \times X \rightarrow X \times X$ be a set-theoretic Yang-Baxter operator on $X.$ For each integer $n>0,$ we define the $n$-chain group $C_{n}^{YB}(X)$ to be the free abelian group generated by the elements of $X^{n}$ and the $n$-boundary homomorphism $\partial_{n}^{YB}: C_{n}^{YB}(X) \rightarrow C_{n-1}^{YB}(X)$ by $\sum\limits_{i=1}^{n}(-1)^{i+1}(d_{i,n}^{l}-d_{i,n}^{r}),$ where the two face maps $d_{i,n}^{l},d_{i,n}^{r}:C_{n}^{YB}(X) \rightarrow C_{n-1}^{YB}(X)$ are given by
$$d_{i,n}^{l} = (R_{2} \times \textrm{Id}_{X}^{\times (n-2)}) \circ (\textrm{Id}_{X} \times R \times \textrm{Id}_{X}^{\times (n-3)}) \circ \cdots \circ (\textrm{Id}_{X}^{\times (i-2)} \times R \times \textrm{Id}_{X}^{\times (n-i)}),$$
$$d_{i,n}^{r} = (\textrm{Id}_{X}^{\times (n-2)} \times R_{1}) \circ (\textrm{Id}_{X}^{\times (n-3)} \times R \times \textrm{Id}_{X}) \circ \cdots \circ (\textrm{Id}_{X}^{\times (i-1)} \times R \times \textrm{Id}_{X}^{\times (n-i-1)}).$$
Then $C_{*}^{YB}(X):=(C_{n}^{YB}(X),\partial_{n}^{YB})$ forms a chain complex, and the yielded homology $H_{*}^{YB}(X)$ is called the \emph{set-theoretic Yang-Baxter homology} of $X.$

Consider the subgroup $C_{n}^{D}(X)$ of $C_{n}^{YB}(X)$ defined by
$$C_{n}^{D}(X)=\text{span}\{(x_{1},\ldots,x_{n})\in C_{n}^{YB}(X)~|~ R(x_{i},x_{i+1})=(x_{i},x_{i+1}) \text{~for some~} i=1,\ldots,n-1\},$$
if $n \geq 2,$ otherwise we let $C_{n}^{D}(X)=0.$

\begin{proposition}
$(C_{n}^{D}(X),\partial_{n}^{YB})$ is a sub-chain complex of $(C_{n}^{YB}(X),\partial_{n}^{YB}).$
\end{proposition}

\begin{proof}
We need to show that $\partial_{n}^{YB}(C_{n}^{D}(X))\subset C_{n-1}^{D}(X)$ for every $n>0.$\\
Let $(x_{1},\ldots,x_{n})\in C_{n}^{D}(X).$ Then there exists $j \in \{1,\ldots,n-1\}$ such that $R(x_{j},x_{j+1})=(x_{j},x_{j+1}),$ and we denote such $x_{j+1}$ by $\overline{x_{j}}.$\\
We first show that the image of $d_{i,n}^{l}$ is contained in $C_{n-1}^{D}(X).$

$(1)$ Clearly $d_{i,n}^{l}(x_{1},\ldots,x_{j},\overline{x_{j}},\ldots,x_{n})\in C_{n-1}^{D}(X)$ if $i<j.$

$(2)$ The terms for $d_{j,n}^{l}$ and $d_{j+1,n}^{l}$ cancel each other because $R(x_{j},\overline{x_{j}})=(x_{j},\overline{x_{j}});$ thus,
\begin{flalign*}
d_{j+1,n}^{l}(x_{1},\ldots,x_{j},\overline{x_{j}},\ldots,x_{n})
&=d_{j,n}^{l}\circ (\textrm{Id}_{X}^{\times (j-1)} \times R \times \textrm{Id}_{X}^{\times (n-j-1)})(x_{1},\ldots,x_{j},\overline{x_{j}},\ldots,x_{n})\\
&=d_{j,n}^{l}(x_{1},\ldots,x_{j},\overline{x_{j}},\ldots,x_{n}),
\end{flalign*}
and there is a sign difference.

$(3)$ When $i>j,$ denote by $d_{i,n}^{l}(x_{1},\ldots,x_{j},\overline{x_{j}},\ldots,x_{n})=(y_{1},\ldots,y_{j},y_{j+1},\ldots,y_{i-1},x_{i+1},\ldots,x_{n}).$ We prove that $y_{j+1}=\overline{y_{j}}$ so that $d_{i,n}^{l}(x_{1},\ldots,x_{j},\overline{x_{j}},\ldots,x_{n}) \in C_{n-1}^{D}(X).$
By the definition of the face map $d_{i,n}^{l},$ we have
$$(R\times \textrm{Id}_{X})\circ (\textrm{Id}_{X}\times R)(x_{j},\overline{x_{j}},z)=(w,y_{j},y_{j+1}),$$
where $z$ is the $(j+2)$th coordinate of $(\textrm{Id}_{X}^{\times (j+1)} \times R \times \textrm{Id}_{X}^{\times (n-j-3)}) \circ \cdots \circ (\textrm{Id}_{X}^{\times (i-2)} \times R \times \textrm{Id}_{X}^{\times (n-i)})(x_{1},\ldots,x_{j},\overline{x_{j}},\ldots,x_{n})$ and $w$ is the $j$th coordinate of $(\textrm{Id}_{X}^{\times (j-1)} \times R \times \textrm{Id}_{X}^{\times (n-j-1)}) \circ \cdots \circ (\textrm{Id}_{X}^{\times (i-2)} \times R \times \textrm{Id}_{X}^{\times (n-i)})(x_{1},\ldots,x_{j},\overline{x_{j}},\ldots,x_{n}).$ Then
\begin{flalign*}
(w,y_{j},y_{j+1})
&=(R\times \textrm{Id}_{X})\circ (\textrm{Id}_{X}\times R)(x_{j},\overline{x_{j}},z)\\
&=(R\times \textrm{Id}_{X})\circ (\textrm{Id}_{X}\times R)\circ  (R\times \textrm{Id}_{X})(x_{j},\overline{x_{j}},z) \textrm{~because $R(x_{j},\overline{x_{j}})=(x_{j},\overline{x_{j}})$}\\
&=(\textrm{Id}_{X}\times R)\circ (R\times \textrm{Id}_{X})\circ  (\textrm{Id}_{X}\times R)(x_{j},\overline{x_{j}},z) \textrm{~by the Yang-Baxter equation}\\
&=(\textrm{Id}_{X}\times R)(w,y_{j},y_{j+1}).
\end{flalign*}
Therefore, $R(y_{j},y_{j+1})=(y_{j},y_{j+1}),$ i.e., $y_{j+1}=\overline{y_{j}}$ as desired.\\
In the same way as above, one can show that the image of $d_{i,n}^{r}$ is also contained in $C_{n-1}^{D}(X).$
\end{proof}

The homology $H_{n}^{D}(X)=H_{n}(C_{*}^{D}(X))$ is called the \emph{degenerate set-theoretic Yang-Baxter homology groups} of $X.$ Consider the quotient chain complex $C_{*}^{NYB}(X):=(C_{n}^{NYB}(X),\partial_{n}^{NYB}),$ where $C_{n}^{NYB}(X)=C_{n}^{YB}(X) \big/ C_{n}^{D}(X),$ and $\partial_{n}^{NYB}$ is the induced homomorphism. For an abelian group $A,$ define the chain and cochain complexes $C_{*}^{NYB}(X;A):=(C_{n}^{NYB}(X;A),\partial_{n}^{NYB})$ and $C^{*}_{NYB}(X;A):=(C^{n}_{NYB}(X;A),\delta^{n}_{NYB}),$ where
$$C_{n}^{NYB}(X;A)= C_{n}^{NYB}(X) \otimes A, ~ \partial_{n}^{NYB} = \partial^{NYB}_{n} \otimes \textrm{Id}_{A},$$
$$C^{n}_{NYB}(X;A)= \text{Hom}(C_{n}^{NYB}(X), A), ~ \delta^{n}_{NYB} = \text{Hom}(\partial^{NYB}_{n}, \textrm{Id}_{A}).$$

\begin{definition}
Let $R$ be a set-theoretic Yang-Baxter operator on $X.$ For a given abelian group $A,$ the homology group and cohomology group
$$H_{n}^{NYB}(X;A)=H_{n}(C_{*}^{NYB}(X;A))=Z_{n}^{NYB}(X;A) \big/ B_{n}^{NYB}(X;A),$$
$$H^{n}_{NYB}(X;A)=H^{n}(C^{*}_{NYB}(X;A))=Z^{n}_{NYB}(X;A) \big/ B^{n}_{NYB}(X;A)$$
are called the $n$th \emph{normalized set-theoretic Yang-Baxter homology group of $X$ with coefficient group $A$} and the $n$th \emph{normalized set-theoretic Yang-Baxter cohomology group of $X$ with coefficient group $A.$}
\end{definition}

\begin{table}
\centering
\caption{Homology of cyclic biquandles}\label{table:comp1}
\ra{1.3}
\begin{tabular}{@{}lllll@{}}
    \toprule
    $n$&$1$&$2$&$3$&$4$\\
    \midrule
    $H_n^{YB}(C_{3})$  &  $\mathbb{Z} \oplus \mathbb{Z}_{3}$  &  $\mathbb{Z}^{3}$  &  $\mathbb{Z}^{9} \oplus \mathbb{Z}_{3}$  &  $\mathbb{Z}^{27}$\\
    $H_n^{D}(C_{3})$  &  $0$  &  $\mathbb{Z}$  &  $\mathbb{Z}^{5}$  &  $\mathbb{Z}^{19}$\\
    $H_n^{NYB}(C_{3})$  &  $\mathbb{Z} \oplus \mathbb{Z}_{3}$  &  $\mathbb{Z}^{2}$  &  $\mathbb{Z}^{4} \oplus \mathbb{Z}_{3}$  &  $\mathbb{Z}^{8}$\\
    &&&&\\
    $H_n^{YB}(C_{5})$  &  $\mathbb{Z} \oplus \mathbb{Z}_{5}$  &  $\mathbb{Z}^{5}$  &  $\mathbb{Z}^{25} \oplus \mathbb{Z}_{5}$  &  $\mathbb{Z}^{125}$\\
    $H_n^{D}(C_{5})$  &  $0$  &  $\mathbb{Z}$  &  $\mathbb{Z}^{9}$  &  $\mathbb{Z}^{61}$\\
    $H_n^{NYB}(C_{5})$  &  $\mathbb{Z} \oplus \mathbb{Z}_{5}$  &  $\mathbb{Z}^{4}$  &  $\mathbb{Z}^{16} \oplus \mathbb{Z}_{5}$  &  $\mathbb{Z}^{64}$\\
    &&&&\\
    $H_n^{YB}(C_{8})$  &  $\mathbb{Z} \oplus \mathbb{Z}_{8}$  &  $\mathbb{Z}^{8}$  &  $\mathbb{Z}^{64} \oplus \mathbb{Z}_{8}$  &  \\
    $H_n^{D}(C_{8})$  &  $0$  &  $\mathbb{Z}$  &  $\mathbb{Z}^{15}$  &  \\
    $H_n^{NYB}(C_{8})$  &  $\mathbb{Z} \oplus \mathbb{Z}_{8}$  &  $\mathbb{Z}^{7}$  &  $\mathbb{Z}^{49} \oplus \mathbb{Z}_{8}$  &  \\
    \bottomrule\\
    &&&&
\end{tabular}
\end{table}

\begin{table}
\centering
\caption{Homology of Alexander biquandles}\label{table:comp2}
\ra{1.3}
\begin{tabular}{@{}llllllll@{}}
    \toprule
    $n$&$1$&$2$&$3$&\vline&$n$&$1$&$2$\\
    \midrule
    $H_n^{YB}(\mathbb{Z}_{8;3,5})$       &$\mathbb{Z}^{2}$   &$\mathbb{Z}^{4} \oplus \mathbb{Z}_{2}^{2}$     &$\mathbb{Z}^{8} \oplus \mathbb{Z}_{2}^{4} \oplus \mathbb{Z}_{8}^{2} $ &\vline&
    $H_n^{YB}(\mathbb{Z}_{9;4,4})$       &$\mathbb{Z}^{3} \oplus \mathbb{Z}_{3}$   &$\mathbb{Z}^{9} \oplus \mathbb{Z}_{3}^{3}$     \\
    $H_n^D(\mathbb{Z}_{8;3,5})$          &$0$                &$\mathbb{Z}^{2}$                               &$\mathbb{Z}^{6} \oplus \mathbb{Z}_{2}^{2}$ &\vline&
    $H_n^D(\mathbb{Z}_{9;4,4})$          &$0$                &$\mathbb{Z}^{3}$                               \\
    $H_n^{NYB}(\mathbb{Z}_{8;3,5})$      &$\mathbb{Z}^{2}$   &$\mathbb{Z}^{2} \oplus \mathbb{Z}_{2}^{2}$     &$\mathbb{Z}^{2} \oplus \mathbb{Z}_{2}^{2} \oplus \mathbb{Z}_{8}^{2} $ &\vline&
    $H_n^{NYB}(\mathbb{Z}_{9;4,4})$      &$\mathbb{Z}^{3} \oplus \mathbb{Z}_{3}$   &$\mathbb{Z}^{6} \oplus \mathbb{Z}_{3}^{3}$     \\
    &&&&\vline&&&\\
    $H_n^{YB}(\mathbb{Z}_{8;5,5})$       &$\mathbb{Z}^{4} \oplus \mathbb{Z}_{2}$   &$\mathbb{Z}^{24} \oplus \mathbb{Z}_{2}^{3}$     &$\mathbb{Z}^{160} \oplus \mathbb{Z}_{2}^{15} \oplus \mathbb{Z}_{4}$&\vline&
    $H_n^{YB}(\mathbb{Z}_{16;13,13})$       &$\mathbb{Z}^{4} \oplus \mathbb{Z}_{4}$   &$\mathbb{Z}^{24} \oplus \mathbb{Z}_{2}^{2} \oplus \mathbb{Z}_{4}^{3}$     \\
    $H_n^D(\mathbb{Z}_{8;5,5})$          &$0$                &$\mathbb{Z}^{4}$                               &$\mathbb{Z}^{44} \oplus \mathbb{Z}_{2}^{4}$ &\vline&
    $H_n^D(\mathbb{Z}_{16;13,13})$          &$0$                &$\mathbb{Z}^{4}$                               \\
    $H_n^{NYB}(\mathbb{Z}_{8;5,5})$      &$\mathbb{Z}^{4} \oplus \mathbb{Z}_{2}$   &$\mathbb{Z}^{20} \oplus \mathbb{Z}_{2}^{3}$     &$\mathbb{Z}^{116} \oplus \mathbb{Z}_{2}^{11} \oplus \mathbb{Z}_{4} $ &\vline&
    $H_n^{NYB}(\mathbb{Z}_{16;13,13})$      &$\mathbb{Z}^{4} \oplus \mathbb{Z}_{4}$   &$\mathbb{Z}^{20} \oplus \mathbb{Z}_{2}^{2} \oplus \mathbb{Z}_{4}^{3}$     \\
    \bottomrule
    &&&&&&&
\end{tabular}
\end{table}

\begin{example}
The following are some computational results for (normalized) set-theoretic Yang-Baxter homology groups:
\begin{enumerate}
  \item Homology groups of some cyclic biquandles and Alexander biquandles are provided in Table \ref{table:comp1} and Table \ref{table:comp2}.
  \item For a given rack (respectively, a given quandle) $(X,*),$ one can obtain the birack (respectively, the biquandle) $\widetilde{X}$ by defining $R_{1}(A_{1},A_{2})=A_{2}$ and $R_{2}(A_{1},A_{2})=A_{1}*A_{2}$ for all $A_{1}, A_{2} \in X.$ In this case, the rack homology (respectively, quandle homology) of $X$ and the set-theoretic Yang-Baxter homology of $\widetilde{X}$ coincide.
  \item For a given shelf $(X,*)$ (Laver tables, for example) one can construct the set-theoretic solution of the Yang-Baxter equation in the same way as in (2) above, and it is neither a birack nor a biquandle, in general.
      Homology groups of some shelves are given in Table \ref{table:comp3}. In Table \ref{table:comp3}, $L_{2}$ denotes the second Laver table and $T_{2}$ and $T_{3}$ are the shelves provided in \cite{CMP}.

\end{enumerate}
\end{example}

\begin{table}
\centering
\caption{Homology of shelves}\label{table:comp3}
\ra{1.3}
\begin{tabular}{@{}lllll@{}}
    \toprule
    $n$&$1$&$2$&$3$&$4$\\
    \midrule
    $H_n^{YB}(L_{2})$  &  $\mathbb{Z}$  &  $\mathbb{Z}$  &  $\mathbb{Z}$  &  $\mathbb{Z}$\\
    $H_n^{NYB}(L_{2})$  &  $\mathbb{Z}$  &  $0$  &  $0$  &  $0$\\
    &&&&\\
    $H_n^{YB}(T_{2})$  &  $\mathbb{Z}^{2}$  &  $\mathbb{Z}^{4}$  &  $\mathbb{Z}^{8}$  &  $\mathbb{Z}^{16}$\\
    $H_n^{NYB}(T_{2})$  &  $\mathbb{Z}^{2}$  &  $\mathbb{Z}^{2}$  &  $\mathbb{Z}^{2}$  &  $\mathbb{Z}^{2}$\\
    &&&&\\
    $H_n^{YB}(T_{3})$  &  $\mathbb{Z}^{3}$  &  $\mathbb{Z}^{9}$  &  $\mathbb{Z}^{27}$  &  $\mathbb{Z}^{81}$\\
    $H_n^{NYB}(T_{3})$  &  $\mathbb{Z}^{3}$  &  $\mathbb{Z}^{6}$  &  $\mathbb{Z}^{12}$  &  $\mathbb{Z}^{24}$\\
    \bottomrule\\
    &&&&
\end{tabular}
\end{table}

It is natural to ask whether the set-theoretic Yang-Baxter homology groups can be split into the normalized and degenerated parts. General degeneracies and decompositions in the set-theoretic Yang-Baxter homology of a semi-strong skew cubical structure have been discussed in \cite{LV}. However, the above is not a semi-strong skew cubical structure in general. It was proven in \cite{PVY} that the set-theoretic Yang-Baxter homology of a cyclic biquandle can be split into the normalized and degenerated parts.

\subsection{The $n$-cocycles of the Alexander biquandles}

We investigate some non-trivial cocycles of the Alexander biquandles, which could later be used to compute the homology groups and classify knots and links.

\begin{lemma} \label{facemaps}
For an Alexander biquandle $X,$ the face maps $d_{i,n}^{l},d_{i,n}^{r}:C_{n}^{YB}(X) \rightarrow C_{n-1}^{YB}(X)$ have the formulas:
$$d_{i,n}^{l}(x_{1},\ldots,x_{n})=(tx_{1}+(1-t)x_{i}, \ldots, tx_{i-1}+(1-t)x_{i}, x_{i+1}, \ldots, x_{n}),$$
$$d_{i,n}^{r}(x_{1},\ldots,x_{n})=(x_{1}, \ldots, x_{i-1}, (1-s)x_{i}+sx_{i+1}, \ldots, (1-s)x_{i}+sx_{n} ).$$
\end{lemma}

\begin{proof}
Since $(1-s)(1-t)=0,$ we have the identities $s(1-t)=1-t$ and $t(1-s)=1-s.$ By direct computation, one can obtain the above formulas.
\end{proof}

\begin{theorem}
Let $X=\mathbb{Z}_{m;s,t}$ be an Alexander biquandle. For $n \geq 2,$ the map $\theta_{n} \in C_{NYB}^{n}(X; \mathbb{Z}_{m})$ defined by
$$\theta_{n}(x_{1},\ldots,x_{n})=\prod_{i=1}^{n-1}(x_{i}-x_{i+1})$$
and extending linearly to all elements of $C^{NYB}_{n}(X)$ is an $n$-cocycle\footnote{For example, $[\theta_{2}]$ is non-trivial when $X=\mathbb{Z}_{8;3,5}$, $\mathbb{Z}_{9;4,7}$, $\mathbb{Z}_{15;11,7}$}.
\end{theorem}

\begin{proof}
(i) Note that $\overline{x}=x$ in $\mathbb{Z}_{m;s,t}$ as $s$ and $t$ are units. Thus, $\theta_{n}(x_{1},\ldots,x_{n})=0$ for every $(x_{1},\ldots,x_{n})\in C_{n}^{D}(X).$ \\
(ii) We next show that $\theta_{n} \circ \partial_{n+1}^{YB}= \theta_{n} \circ \left( \sum\limits_{i=1}^{n+1}(-1)^{i+1}(d_{i,n+1}^{l}-d_{i,n+1}^{r}) \right) = 0$ for each $n \geq 2.$\\
Let $(x_{1},\ldots,x_{n+1}) \in C_{n+1}^{YB}(X).$ By Lemma \ref{facemaps} we have
\begin{flalign*}
\theta_{n} \circ d_{i,n+1}^{l} (x_{1},\ldots,x_{n+1})
&= \theta_{n}  (tx_{1}+(1-t)x_{i}, \ldots, tx_{i-1}+(1-t)x_{i}, x_{i+1}, \ldots, x_{n+1})\\
&= t^{i-1}(x_{1}-x_{2}) \cdots (x_{i-2}-x_{i-1}) (x_{i-1}-x_{i}) (x_{i+1}-x_{i+2}) \cdots (x_{n}-x_{n+1})\\
&+ t^{i-2}(x_{1}-x_{2}) \cdots (x_{i-2}-x_{i-1}) (x_{i}-x_{i+1}) (x_{i+1}-x_{i+2}) \cdots (x_{n}-x_{n+1}).
\end{flalign*}
Therefore,
$\theta_{n} \circ \left( \sum\limits_{i=1}^{n+1}(-1)^{i+1}d_{i,n+1}^{l} \right) (x_{1},\ldots,x_{n+1}) =0.$
Similarly, one can obtain\\ $\theta_{n} \circ \left( \sum\limits_{i=1}^{n+1}(-1)^{i}d_{i,n+1}^{r} \right) (x_{1},\ldots,x_{n+1}) =0,$ as desired.
\end{proof}

\vspace{0.5cm}
\section{Biquandle spaces and their homotopy groups}\label{section3}

A \emph{pre-simplicial set}\footnote{Eilenberg and Zilber \cite{EZ} introduced the notion of a simplicial set under the name of \emph{complete semi-simplicial complex}. A semi-simplicial complex in \cite{EZ} is now called a \emph{pre-simplicial set} \cite{Lod, May}.} $\mathcal{X}=(X_{n},d_{i})$ is a collection of sets $X_{n},$ $n \geq 0$ together with face maps $d_{i}:=d_{i,n}:X_{n} \rightarrow X_{n-1},$ which are defined for $0 \leq i \leq n$ and satisfy the relation
$$d_{i}d_{j}=d_{j-1}d_{i} \hbox{~for~} i < j.$$
The dependencies with respect to $n$ are typically omitted from its notation.
A pre-simplicial set can be turned into a chain complex $(C_{n},\partial_{n}),$ where $C_{n}=\mathbb{Z}X_{n}$ is the free abelian group generated by the elements of $X_{n},$ and $\partial_{n}$ is the linearization of $\sum\limits_{i=0}^{n}(-1)^{i}d_{i}.$\\

The geometric realization $|\mathcal{X}|$ of a pre-simplicial set $\mathcal{X}=(X_{n},d_{i})$ is the cell complex constructed by gluing together standard simplices with the instruction provided by $\mathcal{X}.$ A detailed construction is as follows:
$$|\mathcal{X}|=\coprod\limits_{n \geq 0}(X_{n} \times \triangle^{n}) \Big/ \sim_{s},$$
where
\begin{itemize}
  \item each set $X_{n}$ is endowed with the discrete topology;
  \item $\triangle^{n}=\{(t_{0},\ldots,t_{n}) \in \mathbb{R}^{n+1} \mid \sum\limits_{i=0}^{n}t_{i}=1, t_{i} \geq 0\}$ is the $n$-simplex with its standard topology;
  \item the equivalence relation $\sim_{s}$ is defined by $(\textbf{x}, d^{i}(\textbf{t})) \sim_{s} (d_{i}(\textbf{x}),\textbf{t}),$\\
  where $\textbf{x} \in X_{n},$ $\textbf{t} \in \triangle^{n-1}$ and $d^{i}:=d^{i,n}:\triangle^{n-1} \rightarrow \triangle^{n}$ are the coface maps given by $d^{i}(t_{0}, \ldots, t_{n-1})=(t_{0},\ldots ,t_{i-1},0,t_{i},\ldots,t_{n-1})$ for $0 \leq i \leq n$ and satisfy the relation $d^{i}d^{j-1}=d^{j}d^{i}$ if $0 \leq i < j \leq n.$
\end{itemize}
Note that $|\mathcal{X}|$ is a cell-complex with one $n$-cell for each element of $X_{n},$ and the homology of $|\mathcal{X}|$ coincides with that of the chain complex $(C_{n},\partial_{n}).$

The cubical category can be developed in a similar way to the simplicial one. A \emph{pre-cubical set}\footnote{The cubical category was considered first by J.-P. Serre in his PhD thesis \cite{Ser}. See \cite{FRS2} for details.} $\mathcal{X}=(X_{n},d_{i}^{\varepsilon})$ is a collection of sets $X_{n},$ $n \geq 0$ with face maps $d_{i}^{\varepsilon}:=d_{i,n}^{\varepsilon}:X_{n} \rightarrow X_{n-1},$ which are defined for $1 \leq i \leq n,~ \varepsilon \in \{0,1\}$ and satisfy the relation
$$d_{i}^{\varepsilon}d_{j}^{\delta}=d_{j-1}^{\delta}d_{i}^{\varepsilon} \hbox{~if~} i < j \hbox{~for~} \delta,\varepsilon \in \{0,1\}.$$
A pre-cubical set can also be turned into a chain complex $(C_{n},\partial_{n}),$ where $C_{n}=\mathbb{Z}X_{n}$ is the free abelian group generated by the elements of $X_{n},$ and $\partial_{n}$ is the linearization of $\sum\limits_{i=1}^{n}(-1)^{i}(d_{i}^{0}-d_{i}^{1}).$
One can obtain the geometric realization $|\mathcal{X}|$ of a pre-cubical set $\mathcal{X}=(X_{n},d_{i}^{\varepsilon})$ by gluing together standard cubes with the instruction provided by $\mathcal{X}:$
$$|\mathcal{X}|=\coprod\limits_{n \geq 0}(X_{n} \times \square^{n}) \Big/ \sim_{c},$$
\begin{itemize}
  \item each set $X_{n}$ is endowed with the discrete topology;
  \item $\square^{n}=[0,1]^{n}$ is the $n$-cube with its standard topology;
  \item the equivalence relation $\sim_{c}$ is defined by $(\textbf{x}, d^{i}_{\varepsilon}(\textbf{t})) \sim_{c} (d_{i}^{\varepsilon}(\textbf{x}),\textbf{t}),$\\
  where $\textbf{x} \in X_{n},$ $\textbf{t} \in \square^{n-1}$ and $d^{i}_{\varepsilon}:=d^{i,n}_{\varepsilon}:\square^{n-1} \rightarrow \square^{n}$ are the coface maps defined by $d^{i}_{\varepsilon}(t_{1},\ldots,t_{n-1})=(t_{1},\ldots,t_{i-1},\varepsilon,t_{i},\ldots,t_{n-1})$ for $1 \leq i \leq n, ~ \varepsilon \in \{0,1\}$ and satisfy the relation $d^{i}_{\varepsilon}d^{j-1}_{\delta}=d^{j}_{\delta}d^{i}_{\varepsilon} \hbox{~if~} 1 \leq i < j \leq n, ~ \delta, \varepsilon \in \{0,1\}.$
\end{itemize}
Again, $|\mathcal{X}|$ is a cell-complex with one $n$-cell for each element of $X_{n},$ and the homology of $|\mathcal{X}|$ coincides with that of the chain complex $(C_{n},\partial_{n}).$

\subsection{Biquandle spaces}

For a given set $X,$ let $R=(R_{1}, R_{2}): X \times X \rightarrow X \times X$ be a set-theoretic Yang-Baxter operator. We define the face maps $d_{i}^{r},d_{i}^{l}: X^{n} \rightarrow X^{n-1}$ by
$$d_{i}^{r} = (\textrm{Id}_{X}^{\times (n-2)} \times R_{1}) \circ (\textrm{Id}_{X}^{\times (n-3)} \times R \times \textrm{Id}_{X}) \circ \cdots \circ (\textrm{Id}_{X}^{\times (i-1)} \times R \times \textrm{Id}_{X}^{\times (n-i-1)}),$$
$$d_{i}^{l} = (R_{2} \times \textrm{Id}_{X}^{\times (n-2)}) \circ (\textrm{Id}_{X} \times R \times \textrm{Id}_{X}^{\times (n-3)}) \circ \cdots \circ (\textrm{Id}_{X}^{\times (i-2)} \times R \times \textrm{Id}_{X}^{\times (n-i)}).$$
Then $\mathcal{X}=(X^{n}, d_{i}^{r}, d_{i}^{l})$ forms a pre-cubical set, where $X^{0}$ is a singleton set $\{ \ast \}.$ In this case the homology of its geometric realization $|\mathcal{X}|$ is the homology for the set-theoretic Yang-Baxter equation in \cite{CES} (see Figure \ref{cells}).\\
\begin{figure}[h]
\centerline{{\psfig{figure=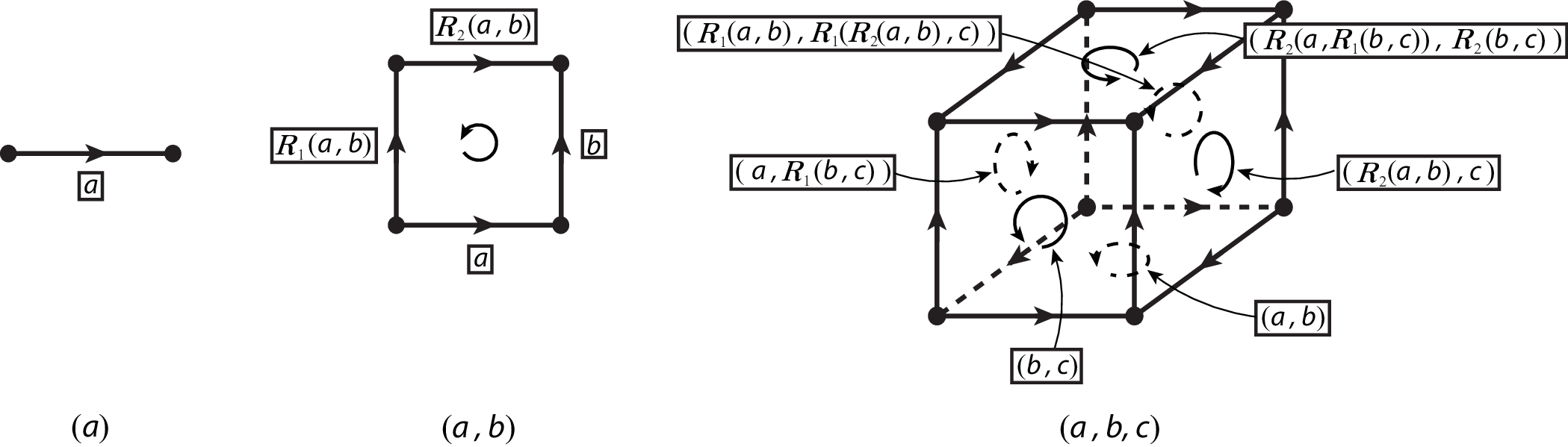,height=4.5cm}}}
\caption{Low-dimensional cells of $|\mathcal{X}|$}
\label{cells}
\end{figure}
When $(X, R_{1}, R_{2})$ is a birack, its geometric realization is called a \emph{birack space}. If $(X, R_{1}, R_{2})$ is a biquandle, the birack space can be transformed into a more interesting space, called a \emph{biquandle space}\footnote{The classifying space of a biquandle was first discussed by Fenn \cite{Fen}.}.
Let $(X, R_{1}, R_{2})$ be a biquandle, and let $|\mathcal{X}|_{n}$ denote the $n$-skeleton of the birack space $|\mathcal{X}|$ of the biquandle $X.$ For each $x \in X,$ we denote the unique element $y \in X$ such that $R(x,y)=(x,y)$ by $\overline{x},$ that is, $R(x,\overline{x})=(x,\overline{x}).$ Let $D^{m}=\{ (x_{1},\ldots ,x_{m}) \in X^{m} ~|~ \overline{x_{i_{0}}} = x_{i_{0}+1}  \hbox{~for some~} 1 \leq i_{0} \leq m-1 \}$ be the subset of $X^{m}.$ Note that $D^{0}=\O=D^{1}.$ In analogy to the way quandle spaces in \cite{Nos1} were obtained from rack spaces \cite{FRS1}, one can construct the $n$-skeleton of the biquandle space $BX_{n}$ by attaching extra cells inductively that bound degenerate cells labeled by the elements of $\bigcup\limits_{m=2}^{n-1}D^{m}$ to $|\mathcal{X}|_{n}.$

The $4$-skeleton $BX_{4}$ of a biquandle space is especially important for classical and surface-knot-theoretic applications, and is given particular attention in this paper. We provide a detailed description of $BX_{4}$ as follows. Let $|\mathcal{X}|_{4}$ be the $4$-skeleton of the birack space of a biquandle $X.$ For each rectangle labeled by $(a, \bar{a}) \in D^{2},$ we glue two edges $\{d_{1}^{\varepsilon}(a, \bar{a})\} \times \square^{1}$ and $\{d_{2}^{\varepsilon}(a, \bar{a})\} \times \square^{1}$ of the rectangle together for every $\varepsilon \in \{l,r\}.$ Then it becomes a $2$-sphere with labeling (see Figure \ref{deg2chain}), and we denote the cone over it by $B_{(a, \bar{a})}^{3}.$ Let $\textbf{x}=(x_{1}, x_{2}, x_{3}) \in D^{3}.$ Then there exists $i_{0} \in \{1,2\}$ so that $\overline{x_{i_{0}}} = x_{i_{0}+1}.$ When we glue two faces $\{d_{i_{0}}^{\varepsilon}(\textbf{x})\} \times \square^{2}$ and $\{d_{i_{0}+1}^{\varepsilon}(\textbf{x})\} \times \square^{2}$ of the cube labeled by $\textbf{x}$ for every $\varepsilon \in \{l,r\},$ it becomes a $S^{2} \times [0,1]$ with labeling, denoted by $S_{\textbf{x}}.$ Then $S_{\textbf{x}} \cup B_{d_{i}^{l}(\textbf{x})}^{3} \cup B_{d_{i}^{r}(\textbf{x})}^{3}$ is a $3$-sphere, where $i \neq i_{0}, i_{0}+1.$ See Figure \ref{deg3chain} for details. The cone over $S_{\textbf{x}} \cup B_{d_{i}^{l}(\textbf{x})}^{3} \cup B_{d_{i}^{r}(\textbf{x})}^{3}$ is denoted by $B_{\textbf{x}}^{4}.$ The space $\big( |\mathcal{X}|_{4} \cup \bigcup\limits_{{\bf x} \in D^{2}} B_{\textbf{x}}^{3} \cup \bigcup\limits_{{\bf x} \in D^{3}} B_{\textbf{x}}^{4} \big) \Big/ \sim_{c}$ is the $4$-skeleton $BX_{4}$ of the biquandle $X.$

\begin{figure}[h]
\centerline{{\psfig{figure=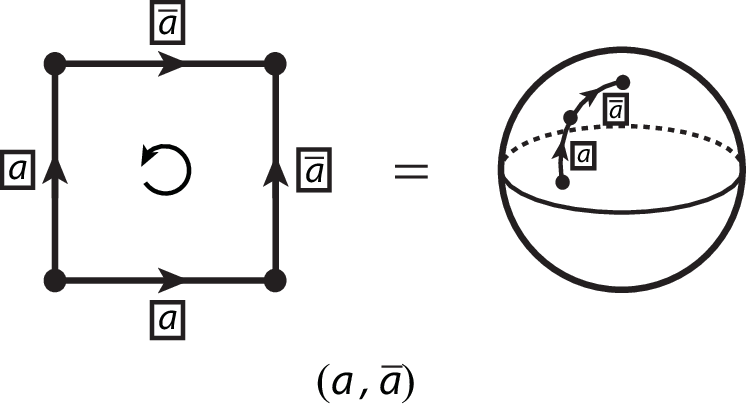,height=3.2cm}}}
\caption{Degenerate $2$-chains}
\label{deg2chain}
\end{figure}

\begin{figure}[h]
\centerline{{\psfig{figure=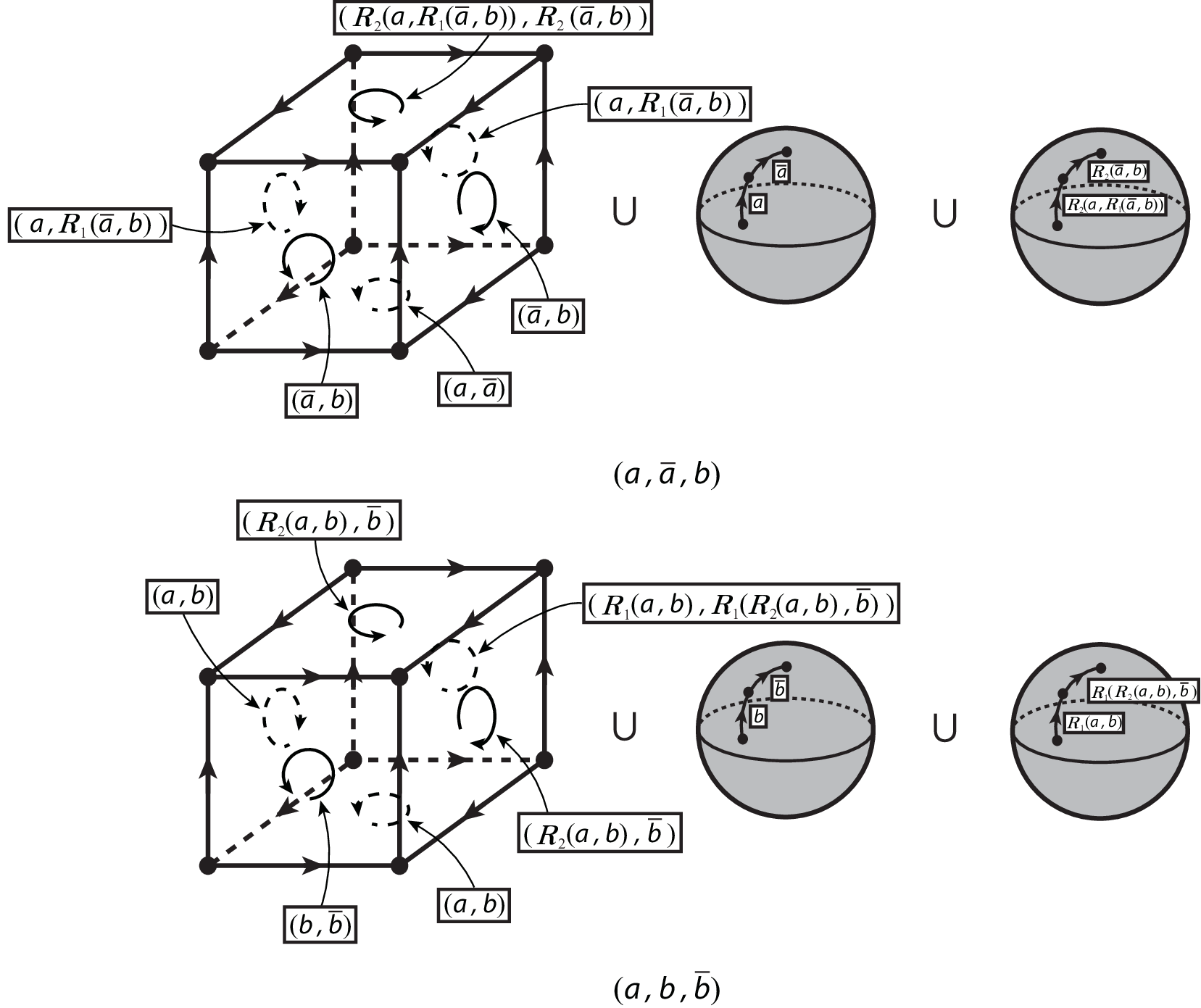,height=8.2cm}}}
\caption{Degenerate $3$-chains}
\label{deg3chain}
\end{figure}

\vspace{0.2cm}
\begin{remark}
Ishikawa and Tanaka \cite{IT} also gave a rigorous definition of a biquandle space, denoted by $B^{Q}X,$ using different face maps and different degeneracies. It is not yet known whether the biquandle space defined above is homotopy equivalent to theirs. However, we do not think they are equivalent in general because in the face map
$$\textbf{x} \mapsto (x_{1}\underline{*}x_{i}, \ldots, x_{i-1}\underline{*}x_{i}, x_{i+1}\overline{*}x_{i}, \ldots, x_{n}\overline{*}x_{i})$$
defined in \cite{IT}, we see that each coordinate is acted by only single element $x_{i}.$

However, for some special biquandles, one can find a continuous map between these two spaces which shows that they are homotopy equivalent to each other. For a cyclic biquandle $X,$ the cellular map $BX \rightarrow B^{Q}X$ assigning each cell labeled by $(x_{1}, x_{2}, \ldots, x_{n})$ to the cell labeled by $(x_{1}, x_{2}+1, \ldots, x_{n}+(n-1))$ is an example.
\end{remark}
\vspace{0.2cm}

\subsection{Homological and homotopical link invariants}

The rack homotopy invariant of framed oriented links, obtained from the classifying spaces of racks, was introduced previously \cite{FRS1}. It was transformed into the quandle homotopy invariant \cite{Nos1, Nos2} and the shadow homotopy invariant \cite{SYan} of oriented links using the classifying spaces of quandles, which are constructed by adding extra cells to the classifying spaces of racks. In a similar manner, we construct a homotopy invariant of oriented links using the classifying spaces of biquandles.

Let $K$ be an oriented link (respectively, an oriented closed knotted surface). Let $(X, R_{1}, R_{2})$ be a biquandle. We call a generic projection of $K$ into $\mathbb{R}^{2}$ (respectively, $\mathbb{R}^{3}$) a \emph{diagram} of $K.$ For each $n=2,3,$ consider the one-point compactification $S^{n}$ of $\mathbb{R}^{n}$ with base point $\infty.$ A biquandle coloring by $X$ of an oriented diagram of $K$ is an assignment of the elements of $X$ to the semi-arcs (respectively, faces) of the oriented diagram with the convention depicted in Figure \ref{biquandlecolor1} (respectively, in Figure \ref{biquandlecolor2}). Note that an $X$-colored crossing (respectively, an $X$-colored triple point) of $K$ represents a chain $\pm(a, b)$ (respectively, $\pm(a, b, c)$) in $C_{n}^{NYB}(X; \mathbb{Z})$ (see Figures \ref{cells}, \ref{biquandlecolor1}, \ref{biquandlecolor2} and compare them to each other). The signs of the chain are determined by the orientations of the corresponding cells. An $X$-colored diagram of $K$ represents a cycle in $Z_{n}^{NYB}(X; \mathbb{Z})$ (for $n=2,3$) that is the signed sum of the chains represented by all crossings of the diagram of $K.$
\begin{figure}[h]
\centerline{{\psfig{figure=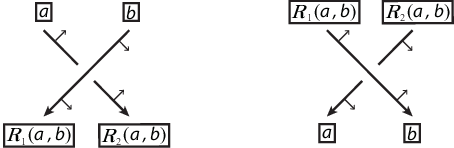,height=3cm}}}
\caption{Coloring convention for oriented links}
\label{biquandlecolor1}
\end{figure}
\begin{figure}[h]
\centerline{{\psfig{figure=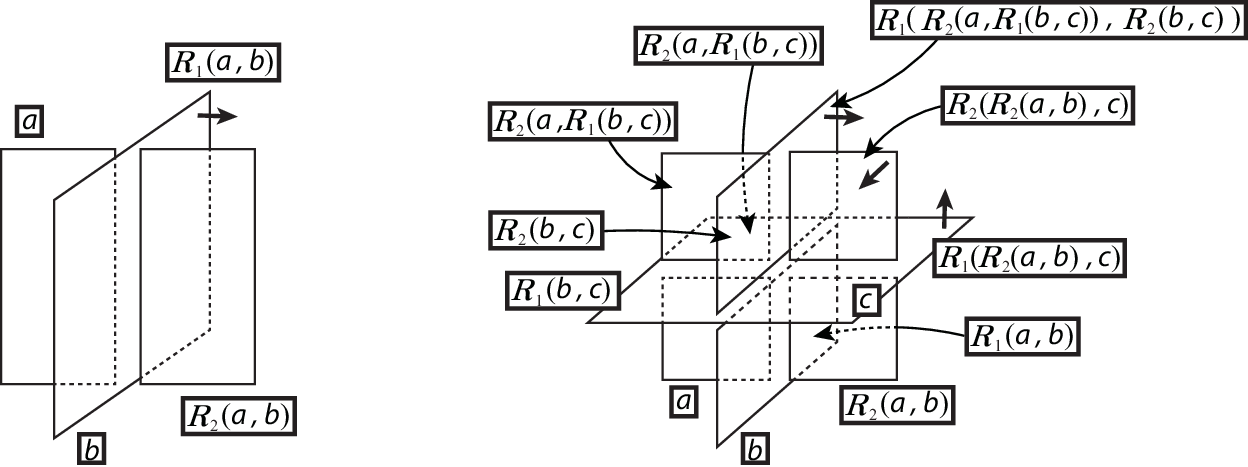,height=5cm}}}
\caption{Coloring convention for oriented knotted surfaces}
\label{biquandlecolor2}
\end{figure}

\begin{proposition}\label{homologicalinv}
The homology class in $H_{n}^{NYB}(X)$ (for $n=2,3$) determined by the represented cycle of a diagram of $K$ is independent of the choice of the diagram.
\end{proposition}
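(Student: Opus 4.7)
The plan is to invoke the classical fact that two oriented diagrams of the same link (respectively, knotted surface) are connected by a finite sequence of Reidemeister moves (respectively, Roseman moves) together with planar (respectively, spatial) isotopies. Since the chain associated with a diagram is manifestly invariant under isotopies of the underlying diagram, it suffices to check that each local move either preserves the represented cycle on the nose or alters it by a boundary in $B_{n}^{NYB}(X;\mathbb{Z})$. I would organize the verification crossing-type by crossing-type (or triple-point-type) and read off each check directly from the definitions of the face maps $d_{i}^{l}, d_{i}^{r}$ in Section~\ref{section2}.

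For the moves of Reidemeister type~II (and the analogous Roseman moves that create or cancel a pair of double-point arcs/triple points), the two new crossings carry identical label tuples but opposite signs coming from their opposing local orientations, so their contributions $+(a,b)$ and $-(a,b)$ (respectively, $\pm(a,b,c)$) cancel identically at the chain level. For Reidemeister type~III (and the tetrahedral Roseman move), the set-theoretic Yang-Baxter equation read off at the three crossings (respectively, the four triple points) on either side of the move asserts precisely that the two signed sums of chains coincide; this is the very same identity that makes $\partial^{NYB}$ a differential, so the represented cycle is literally unchanged.

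For the Reidemeister type~I moves (and, in the surface case, the Roseman moves that introduce or cancel a branch point), the newly created crossing carries a chain of the form $(a,\overline{a})$ with $R(a,\overline{a})=(a,\overline{a})$ by condition~$(3)$ of Definition~\ref{Definition 1.2}, so that this chain lies in $C_{n}^{D}(X)$ and vanishes in the quotient $C_{n}^{NYB}(X;\mathbb{Z})$. Thus the cycle is unchanged modulo the normalization, and this is precisely the step that requires $X$ to be a biquandle rather than a mere birack; without it one would be forced to work with framed links (respectively, broken-surface diagrams with prescribed branch behavior), exactly as in the passage from rack to quandle homology.

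The main obstacle is the case $n=3$: one must verify all of the Roseman moves, especially those mixing triple points with branch points and those involving cusps on the double-decker curves, with careful bookkeeping of signs and of how labels propagate across the sliding sheet. In each case the outcome falls into one of the three patterns above, namely direct cancellation, equality via the Yang-Baxter equation, or a degenerate chain killed by normalization. A more structural way to package this, which I would use to handle the branch-point moves uniformly, is to realize the coloring as a cellular map from $K$ (respectively, its broken-surface diagram) into the $4$-skeleton $BX_{4}$ constructed in Section~\ref{section3}, so that each Roseman move corresponds to a cellular homotopy of the classifying map; the extra cells attached along the subsets $D^{m}$ in the definition of $BX$ are designed exactly so that the moves involving $\overline{x}$-labelled regions become null-homotopic in $BX$, and hence the resulting homology class in $H_{3}^{NYB}(X)$ is independent of the diagram.
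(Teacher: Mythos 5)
Your overall strategy is the same as the paper's: reduce to Reidemeister/Roseman moves and check that each move either leaves the represented cycle unchanged, kills the new contribution in the normalized quotient, or alters the cycle by a boundary. The paper's own proof is exactly this, stated in one sentence with a citation to Theorem 5.6 of \cite{CJKLS}. Your treatment of type II (chain-level cancellation of $+(a,b)$ and $-(a,b)$) and of type I (the new crossing is forced to carry a pair of the form $(a,\overline{a})$ by condition $(3)$ of Definition \ref{Definition 1.2}, hence lies in $C_{2}^{D}(X)$ and vanishes in $C_{2}^{NYB}(X)$) is correct, and you correctly identify the latter as the point where the biquandle axiom and the normalization are used.

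However, your claim for type III is wrong as stated: the represented cycle is \emph{not} literally unchanged under a Reidemeister III move. The six crossings involved (three on each side of the move) read off precisely the six faces $d_{i}^{l}(a,b,c)$, $d_{i}^{r}(a,b,c)$, $i=1,2,3$, of the $3$-chain $(a,b,c)$ determined by the move, so the two cycles differ by $\pm\partial_{3}^{NYB}(a,b,c)$, which is a nonzero boundary in general. What the set-theoretic Yang-Baxter equation guarantees is that a coloring extends consistently across the move (the outgoing colors on the boundary of the disk agree), not that the two signed sums of $2$-chains coincide; the identity making $\partial^{NYB}$ a differential is not the identity you need here. The same correction applies to the tetrahedral Roseman move for $n=3$, where the difference is the boundary of a $4$-chain $(a,b,c,d)$. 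This does not sink the proposition --- boundaries are invisible in homology, which is exactly why the statement concerns the class in $H_{n}^{NYB}(X)$ rather than the cycle itself --- but the mechanism you named is the wrong one: if your account of type III were correct, the cycle itself (not merely its homology class) would be invariant, which is false.
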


\begin{proof}
Let $D_{1}$ and $D_{2}$ be two diagrams representing $K.$ Note that one diagram can be transformed into the other by a finite sequence of Reidemeister moves or Roseman moves. One can show that each move either does not affect the represented cycle or changes its homology class by a boundary (cf.\cite{CJKLS} Theorem 5.6), i.e., the representative cycles of $D_{1}$ and $D_{2}$ are homologous.
\end{proof}

We let $BX$ be (the $4$-skeleton of) the biquandle space of a given biquandle $X.$ It is well-known that two diagrams represent the same oriented link (respectively, the same closed knotted surface) if and only if they are related by a finite sequence of Reidemeister moves (respectively, Roseman moves). Suppose that the diagram $D$ of an oriented link (respectively, an oriented closed knotted surface) is placed inside $I^{2}$ (respectively, $I^{3}$), where $I=[0,1]$ is the unit interval. By considering $D$ as a decomposition of $I^{2}$ (respectively, $I^{3}$) by an immersed curve (respectively, immersed surface), one can obtain its dual decomposition of $I^{2}$ (respectively, modified dual decomposition of $I^{3}$) (see, e.g., \cite{Nos1,Nos2,SYan} for the definitions). Then each $X$-colored crossing (respectively, each $X$-colored triple point) of $D$ is enveloped in a rectangle (respectively, a rectangular box) labeled by the chain represented by the crossing. The union of the rectangles (respectively, rectangular boxes) is mapped to $BX$ and the boundary of $I^{2}$ (respectively, $I^{3}$) is mapped to the base point $*$ of the biquandle space $BX$ (see Figure \ref{homotopyinvariant} for example). Then the homotopy class of the map $(I^{n}, \partial I^{n}) \rightarrow (BX, \ast)$ is an invariant under Reidemeister moves (respectively, Roseman moves) since every diagrammatic equivalence corresponds to cells in $BX.$

\begin{figure}[h]
\centerline{{\psfig{figure=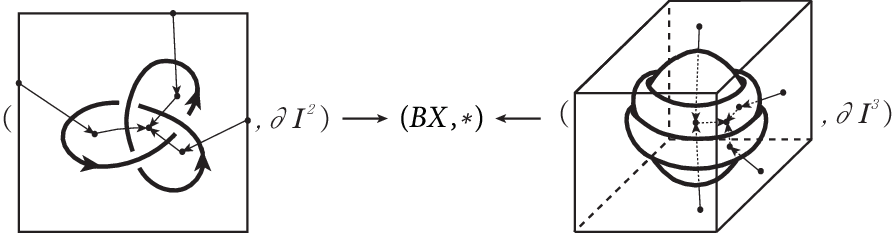,height=3.2cm}}}
\caption{The maps $(I^{n}, \partial I^{n}) \rightarrow (BX, \ast)$ for $n=2,3$}
\label{homotopyinvariant}
\end{figure}

\begin{proposition}(cf.\cite{Nos1,SYan,Nos2})\label{homotopicalinv}
The homotopy class of the maps $(I^{n}, \partial I^{n}) \rightarrow (BX, \ast)$ in $\pi_{n}(BX)$ (for $n=2,3$) is independent of the representative diagram.
\end{proposition}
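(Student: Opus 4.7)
The plan is to lift the argument of Proposition \ref{homologicalinv} to the level of homotopy. For each $X$-colored diagram $D$ of $K$ I will construct an explicit continuous map $f_{D}:(S^{n},\infty)\to(BX,\ast)$ whose induced singular cycle coincides with the one from Section \ref{section3}, and then show that each Reidemeister move (if $n=2$) or Roseman move (if $n=3$) relating two colored diagrams produces a homotopy between the corresponding maps by filling in an appropriate higher-dimensional cell of $BX$.

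First I would build $f_{D}$ cell by cell. Thicken $D$ in $S^{n}$ to obtain a regular decomposition: for $n=2$, crossings become $2$-cells and semi-arcs become $1$-cells; for $n=3$, triple points become $3$-cells, double-point curves become $2$-cells, and sheets between them become $1$-cells. The biquandle colors in the conventions of Figures \ref{biquandlecolor1} and \ref{biquandlecolor2} attach each such cell to the corresponding cube $\{\mathbf{x}\}\times I^{k}$ of the pre-cubical set $\mathcal{X}$, and the complementary regions are sent to the basepoint $\ast$. The biquandle axiom at each crossing or triple point is precisely the cubical face-map compatibility required for $f_{D}$ to be continuous, and its image under the Hurewicz homomorphism recovers the cycle of Proposition \ref{homologicalinv}.

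Next I would check invariance under local moves. The Reidemeister II move and its Roseman analogues create a pair of cells whose colored data is $(a,b)$ and $R(a,b)$; together these bound one standard $(n+1)$-cube of $|\mathcal{X}|$, and that cube itself provides the required homotopy. The Reidemeister III and tetrahedral Roseman moves encode the set-theoretic Yang-Baxter equation, so the two sides of the move are the two face decompositions (compare $d_{i,n}^{l}$ and $d_{i,n}^{r}$ in Section \ref{section2}) of a single higher cube, again exhibiting an explicit cellular homotopy. The Reidemeister I move and its surface analogues are where the enlargement from birack space to biquandle space is essential: each such move forces a labeling of the form $R(x,\overline{x})=(x,\overline{x})$, so the corresponding chain lies in the degenerate subspace $D^{m}$, and by construction it is the base of a mapping cone $C_{\iota_{\mathbf{x}}}$ attached when forming $BX_{m+1}\subset BX$. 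That cone supplies a null-homotopy of the sphere created by the move.

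The main obstacle is the surface case $n=3$: the Roseman moves include several distinct \emph{degenerate} local modifications involving branch points and non-generic triple points, and each of these must be matched individually against the inductive construction of $BX_{4}$ from the sets $D^{2}$ and $D^{3}$. The delicate point is to verify, move by move, that the degenerate subcomplex of $BX$ produced is genuinely contractible in $BX_{4}$, not merely nullhomologous, for which the mapping cones attached along $D^{3}$ (and inside them, the mapping cones along $D^{2}$) are essential; here the inductive skeletal construction in Section \ref{section3} pays off directly, in analogy with the quandle case \cite{Nos1,Nos2} and the shadow case \cite{SYan}. Once this case-check is completed, concatenating the homotopies for the sequence of moves relating $D_{1}$ and $D_{2}$ gives a homotopy $f_{D_{1}}\simeq f_{D_{2}}$ in $BX$, which proves the proposition.
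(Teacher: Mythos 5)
Your proposal follows essentially the same route as the paper's proof: both construct the map $(S^{n},\infty)\to(BX,\ast)$ by enveloping each $X$-colored crossing (or triple point) in a small cube mapped to the corresponding cell of $BX$ and sending the complement to the basepoint, and both establish invariance by matching each Reidemeister/Roseman move to a cell of $BX$ — with the type-I moves absorbed by the extra cells attached along the degenerate chains $D^{m}$ in the biquandle-space construction. Your write-up simply fills in more of the move-by-move detail that the paper delegates to the references \cite{Nos1,Nos2,SYan}.
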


By using Proposition \ref{homologicalinv} and Proposition \ref{homotopicalinv}, one can construct homological and homotopical link invariants in a similar manner to \cite{CJKLS, Nos1, Nos2}. For example, we define the homotopical state-sum invariants of oriented links and oriented closed knotted surfaces as follows:

Let $K$ be an oriented link or an oriented closed knotted surface, and let $D$ be its diagram. For a given finite biquandle $X,$ we denote by $\text{Col}_{X}(D)$ the set of biquandle colorings of $D$ by $X.$ Then for each $\mathcal{C} \in \text{Col}_{X}(D),$ the homotopy class $\Psi_{X}(D;\mathcal{C})$ of the map $\psi_{X}(D;\mathcal{C}):(S^{n},\infty) \rightarrow (BX,*)$ (for $n=2,3$) is independent of the representative diagram by Proposition \ref{homotopicalinv}. Therefore, $\Psi_{X}(K)=\sum\limits_{\mathcal{C} \in \text{Col}_{X}(D)}\Psi_{X}(D;\mathcal{C}) \in \mathbb{Z}[\pi_{n}(BX)]$ is a link invariant. Here, $n=2$ in the case of an oriented link, and $n=3$ in the case of an oriented closed knotted surface.

\subsection{The second homotopy groups of biquandle spaces}

Some properties of the homotopy groups of quandle spaces were discussed in \cite{Nos1,Nos2}. In a way similar to the idea shown in \cite{FRS1,Nos1,Nos2}, we prove that the second homotopy group of a biquandle space (respectively, a birack space) is finitely generated if the biquandle (respectively, the birack) is finite.



It was shown in \cite{FRS1} that every rack space is a simple space, i.e., $\pi_{1}$ acts trivially on $\pi_{n}$ for each $n > 1.$ We can generalize it on birack spaces as follows.


Let $X$ be a set, and let $R$ be a set-theoretic Yang-Baxter operator on $X.$ The \emph{associated group} or \emph{enveloping group} of $(X,R)$, denoted by $AS(X,R)$ or simply by $AS(X),$ is the group with $X$ as the set of generators and defining relations $xy = R_{1}(x,y)R_{2}(x,y)$ for all $x,y \in X.$ Note that if $X$ is a birack, then $AS(X)$ is isomorphic to $\pi_{1}(|\mathcal{X}|)$ by the definition of the birack space $|\mathcal{X}|.$

\begin{lemma}\label{lemma 3.5}
Let $X$ be a finite birack. Consider the set $\overline{X},$ which consists of the elements in $X$ and their inverse elements in the free group $FX$ generated by $X.$ We denote the symmetric group on $\overline{X}$ by $Sym(\overline{X}).$ Then there exists a homomorphism $\phi:AS(X)\rightarrow Sym(\overline{X})\times Sym(\overline{X})^{op}$ such that
\begin{enumerate}
\item $\emph{Ker}(\phi)$ is finitely generated and abelian;
\item $\emph{Im}(\phi)$  is finite.
\end{enumerate}
\end{lemma}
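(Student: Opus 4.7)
The plan is to construct $\phi$ from the two permutation actions that the operations $R_{1}$ and $R_{2}$ naturally induce on $X$, to verify well-definedness via the set-theoretic Yang-Baxter equation, to read off finiteness of $\mathrm{Im}(\phi)$ directly, and to control $\mathrm{Ker}(\phi)$ by invoking the Bieberbach-type structure theorem for associated groups of finite non-degenerate set-theoretic Yang-Baxter solutions.

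For each $x \in X$ I set
$$\sigma_{x}(y) := R_{1}(x,y), \qquad \tau_{x}(y) := R_{2}(y,x),$$
which are bijections of $X$ by conditions $(1)$ and $(2)$ in Definition \ref{Definition 1.2}. These extend to permutations of $\overline{X} = X \sqcup X^{-1}$ by the rules $\sigma_{x}(y^{-1}) := \sigma_{x}(y)^{-1}$ and analogously for $\tau_{x}$. Declaring $\phi(x) := (\sigma_{x}, \tau_{x}) \in Sym(\overline{X}) \times Sym(\overline{X})^{op}$ on generators, descent to $AS(X)$ reduces to the two identities
$$\sigma_{x}\sigma_{y} = \sigma_{R_{1}(x,y)}\sigma_{R_{2}(x,y)}, \qquad \tau_{y}\tau_{x} = \tau_{R_{2}(x,y)}\tau_{R_{1}(x,y)},$$
where the reversed order in the second identity is forced by the $op$-structure on the right factor. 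Evaluating the first at $z \in X$ produces $R_{1}(x, R_{1}(y,z)) = R_{1}(R_{1}(x,y), R_{1}(R_{2}(x,y), z))$, the first-coordinate entry of the set-theoretic Yang-Baxter equation applied to $(x,y,z)$; the second identity is the third-coordinate entry read in the same way. The extension to inverses is compatible with these identities by construction.

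Conclusion $(2)$ is then immediate: $|\overline{X}| = 2|X|$ makes the codomain a finite group, so $\mathrm{Im}(\phi)$ is finite. The finite-generation half of $(1)$ follows at once: $AS(X)$ is finitely generated by $X$, and $\mathrm{Ker}(\phi)$ has finite index in $AS(X)$ by the first isomorphism theorem, hence is itself finitely generated as a finite-index subgroup of a finitely generated group.

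The main obstacle is the abelianness of $\mathrm{Ker}(\phi)$. Here I plan to appeal to the structure theorem of Etingof-Schedler-Soloviev and Lu-Yan-Zhu for associated groups of finite non-degenerate set-theoretic solutions: $AS(X,R)$ fits into a short exact sequence
$$1 \longrightarrow A \longrightarrow AS(X,R) \longrightarrow Q \longrightarrow 1$$
in which $A \cong \mathbb{Z}^{|X|}$ is free abelian and $Q$ is finite, with $Q$ realized as the image of the combined left/right permutation representation of $AS(X,R)$ on $X$. Since $\sigma_{x}$ and $\tau_{x}$ are determined by their restrictions to $X \subset \overline{X}$, restriction to $X$ identifies $\mathrm{Im}(\phi)$ with $Q$, so that $\mathrm{Ker}(\phi)$ is identified with $A$. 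The delicate point, and the reason I prefer to appeal to this structural result rather than argue directly, is precisely this matching of representations: a direct verification that $g h = h g$ for $g, h \in \mathrm{Ker}(\phi)$ inside $AS(X)$ is not visible from $\phi$ alone, since $\phi$ does not distinguish $gh$ from $hg$. Once the identification $\mathrm{Ker}(\phi) = A$ is in hand, however, $\mathrm{Ker}(\phi)$ inherits both finite generation and abelianness from $A \cong \mathbb{Z}^{|X|}$.
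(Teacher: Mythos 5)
Your construction of $\phi$ on generators, the verification that it descends to $AS(X)$ via the first- and third-coordinate identities of the set-theoretic Yang--Baxter equation, the finiteness of $\mathrm{Im}(\phi)$, and the finite generation of $\mathrm{Ker}(\phi)$ as a finite-index subgroup of a finitely generated group are all fine, and up to that point you are close to the paper, which builds the same two permutation representations out of the left and right actions $\xi,\eta$ of $AS(X)$ on itself constructed in Lu--Yan--Zhu \cite{LYZ2}. The gap is exactly at the point you flag as delicate: the abelianness of $\mathrm{Ker}(\phi)$. First, the structure theorem you invoke, with $A\cong\mathbb{Z}^{|X|}$ free abelian of finite index, is the Etingof--Schedler--Soloviev result for finite non-degenerate \emph{involutive} solutions; a birack is a general non-degenerate solution and need not be involutive, so that theorem is not available here. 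The result that is available (and that the paper cites) is Proposition $6$ of \cite{LYZ2}, which asserts only that a certain kernel is finitely generated and abelian -- not free, and not of rank $|X|$ -- and it concerns the joint kernel of the actions on $\overline{X}=X\cup X^{-1}$, where the action on an inverse generator is the braided-group action inherited from the action of $AS(X)$ on itself.

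Second, and relatedly, your extension $\sigma_{x}(y^{-1}):=\sigma_{x}(y)^{-1}$ is \emph{not} that action: in a braided group one has $g\triangleright y^{-1}=\bigl((g\triangleleft y^{-1})\triangleright y\bigr)^{-1}$, which in general differs from $(g\triangleright y)^{-1}$. With your naive doubling, your $\phi$ carries exactly the same information as the representation $AS(X)\rightarrow Sym(X)\times Sym(X)^{op}$ on $X$ alone, and its kernel contains, possibly strictly, the kernel of the genuine action on $\overline{X}$: knowing $g\triangleright y=y$ and $y\triangleleft g=y$ for all $y\in X$ gives $(y^{-1}gy)\triangleright y^{-1}=y^{-1}$, not $g\triangleright y^{-1}=y^{-1}$. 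Consequently the identification of $\mathrm{Ker}(\phi)$ with the abelian normal subgroup controlled by \cite{LYZ2} does not follow, and abelianness of your (potentially larger) kernel is unproven. The repair is to define $\phi$ using the restrictions to $\overline{X}\subset AS(X)$ of the actual left action (a homomorphism to $Sym(\overline{X})$) and right action (an anti-homomorphism, hence a homomorphism to $Sym(\overline{X})^{op}$) of $AS(X)$ on itself from \cite{LYZ2}, so that $\mathrm{Ker}(\phi)=\mathrm{Ker}(\xi_{\overline{X}})\cap\mathrm{Ker}(\eta_{\overline{X}})$ and Proposition $6$ of \cite{LYZ2} applies verbatim; this is precisely the paper's proof.
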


\begin{proof}
$(1)$ Based on the construction in \cite{LYZ2}, we let $\xi$ and $\eta$ be the left and right actions of $AS(X)$ on itself that extend $R_{1}$ and $R_{2}$ respectively. Consider the homomorphism $\phi:AS(X)\rightarrow Sym(\overline{X})\times Sym(\overline{X})^{op}$ defined by $\phi(a)=(\xi_{\overline{X}}(a),(a)\eta_{\overline{X}})$, where $\xi_{\overline{X}},\eta_{\overline{X}}:AS(X) \rightarrow Sym(\overline{X})$ are the homomorphism induced by $\xi$ and the anti-homomorphism induced by $\eta$ respectively (see \cite{LYZ2} for further details). It is clear that $\textrm{Ker}(\phi)=\textrm{Ker}(\xi_{\overline{X}})\cap \textrm{Ker}(\eta_{\overline{X}}).$ Then Proposition $6$ in \cite{LYZ2} implies that $\textrm{Ker}(\phi)$ is finitely generated and abelian.

$(2)$ Since $X$ is finite and $\textrm{Im}(\phi)$ is a subgroup of $Sym(\overline{X})\times Sym(\overline{X})^{op},$ $\textrm{Im}(\phi)$ is finite.
\end{proof}

\begin{definition}\cite{FRS1}
A \emph{cobordism by moves} between two labelled diagrams is a sequence of the following moves:
\begin{enumerate}
  \item Legal Reidemeister $\Omega_{2}$ and $\Omega_{3}$ moves.
  \item Introduction and deletion of unknotted and unlinked circle components in the diagram $D \Leftrightarrow D \cup \bigcirc.$
  \item A bridge move \begin{figure}[h]{\psfig{figure=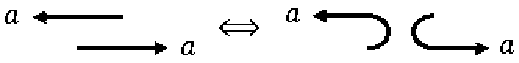,height=0.7cm}}\caption{A bridge move}\end{figure} between adjacent arcs with the same label and opposite orientations.
\end{enumerate}
\end{definition}

\begin{proposition}\cite{FRS1} \label{propFRS}
Homotopy classes of maps $(I^{2}, \partial I^{2}) \rightarrow (|\mathcal{X}|, \ast)$ are in bijective correspondence with equivalence classes of diagrams in $S^{2}$ labelled by $X$ under cobordism by moves.
\end{proposition}

Let us consider the homomorphism $\xi_{\overline{X}}:AS(X) \rightarrow Sym(\overline{X})$ introduced in the proof of Lemma \ref{lemma 3.5}. Let $\widetilde{M}_{X}$ be the connected covering space of $M_{X}:=|\mathcal{X}|$ having $\textrm{Ker} \xi_{\overline{X}}$ as the fundamental group, i.e., $\pi_{1}(\widetilde{M}_{X}) \cong \textrm{Ker} \xi_{\overline{X}} < AS(X) \cong \pi_{1}(|\mathcal{X}|).$ Note that $\widetilde{M}_{X} \rightarrow M_{X}$ is a finite covering because $Sym(\overline{X})$ is of finite order.

\begin{lemma}\label{trivialaction}
For any birack $X,$ the canonical action of $\pi_{1}(\widetilde{M}_{X})$ onto $\pi_{2}(\widetilde{M}_{X})$ is trivial.
\end{lemma}

\begin{proof}
Let $[\gamma] \in \pi_{1}(\widetilde{M}_{X})$ and $[f] \in \pi_{2}(\widetilde{M}_{X}).$ Consider the covering $p:\widetilde{M}_{X} \rightarrow M_{X}$ and its induced homomorphisms $(p_{i})_{*} : \pi_{i}(\widetilde{M}_{X}) \rightarrow \pi_{i}(M_{X})$ ($i=1,2$). Then we have

\begin{flalign*}
(p_{2})_{*}([\gamma] \cdot [f])
&= [p \circ \gamma] \cdot [p \circ f] \text{~} (= (p_{1})_{*}([\gamma]) \cdot (p_{2})_{*}([f])) \text{~} (\text{See Figure~} \ref{trivial action}.)\\
&= [p \circ f] \text{~because~} [p \circ \gamma] \in \textrm{Ker} \xi_{\overline{X}} \text{~} (\text{See Figure~} \ref{action}.)\\
&= (p_{2})_{*}([f]).
\end{flalign*}

\begin{figure}[h]
\centerline{{\psfig{figure=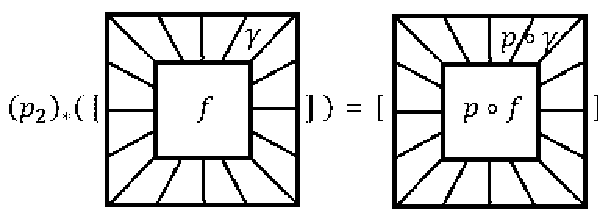,height=2cm}}}
\caption{The canonical action of $\pi_{1}(\widetilde{M}_{X})$ onto $\pi_{2}(\widetilde{M}_{X})$}
\label{trivial action}
\end{figure}

\begin{figure}[h]
\centerline{{\psfig{figure=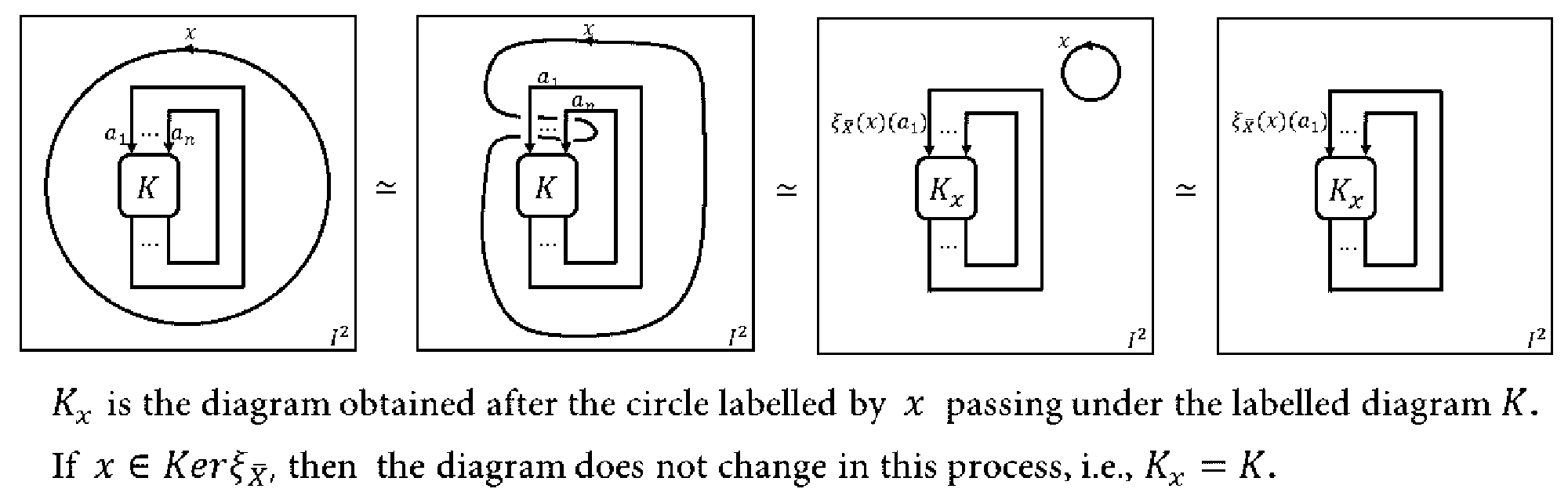,height=4.3cm}}}
\caption{A cobordism by moves between $K$ and $K_{x}$}
\label{action}
\end{figure}

Since $(p_{2})_{*}$ is an isomorphism, $[\gamma] \cdot [f] = [f].$ Therefore, the action is trivial.

\end{proof}

The following proposition gives us a long exact sequence which is used in the proof of Theorem \ref{main}.

\begin{proposition}\cite{McC}\label{exact sequence}
Let $M$ be a connected CW-complex with the trivial canonical action of $\pi_{1}(M)$ on $\pi_{2}(M).$ Then we have the following long exact sequence:
$$\cdots \rightarrow H_{3}(\pi_{1}(M);\mathbb{Z})\xrightarrow{\tau} \pi_{2}(M) \xrightarrow{\mathfrak{h}} H_{2}(M;\mathbb{Z})\rightarrow  H_{2}(\pi_{1}(M);\mathbb{Z})\rightarrow 0,$$
where $H_{3}(\pi_{1}(M);\mathbb{Z})$ is the group homology, $\tau$ is the transgression map, and $\mathfrak{h}$ is the Hurewicz homomorphism.
\end{proposition}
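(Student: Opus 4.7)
The plan is to derive the four-term sequence from the Serre spectral sequence of the canonical fibration
$$\widetilde{M} \longrightarrow M \longrightarrow K(\pi_{1}(M), 1),$$
where $\widetilde{M}$ denotes the universal cover of $M$ and the second map classifies it. First I would observe that $\widetilde{M}$ is simply connected with $\pi_{n}(\widetilde{M}) \cong \pi_{n}(M)$ for $n \geq 2$, so by the Hurewicz theorem $H_{1}(\widetilde{M};\mathbb{Z}) = 0$ and there is a canonical isomorphism $\pi_{2}(M) \cong H_{2}(\widetilde{M};\mathbb{Z})$.

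Next I would analyze the $E^{2}$-page $E^{2}_{p,q} = H_{p}(\pi_{1}(M); H_{q}(\widetilde{M};\mathbb{Z}))$ in low total degree. By naturality of the Hurewicz map, the deck action of $\pi_{1}(M)$ on $H_{2}(\widetilde{M};\mathbb{Z})$ corresponds to the canonical action on $\pi_{2}(M)$, which is trivial by hypothesis, so the local coefficient systems reduce to constant ones. Hence
$$E^{2}_{p,0} = H_{p}(\pi_{1}(M);\mathbb{Z}), \qquad E^{2}_{p,1} = 0, \qquad E^{2}_{0,2} = \pi_{2}(M).$$
Because the row $q = 1$ vanishes, every $d^{2}$ touching the cells $(0,2)$, $(2,0)$, or $(3,0)$ is automatically zero, and higher differentials into $(0,2)$ or out of $(2,0)$ land in the negative quadrant. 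The only surviving relevant differential is therefore the transgression
$$\tau = d^{3}_{3,0}: H_{3}(\pi_{1}(M);\mathbb{Z}) \longrightarrow \pi_{2}(M),$$
after which $E^{\infty}_{0,2} = \pi_{2}(M)/\operatorname{im}(\tau)$ and $E^{\infty}_{2,0} = H_{2}(\pi_{1}(M);\mathbb{Z})$, while $E^{\infty}_{1,1}=0$.

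The filtration of $H_{2}(M;\mathbb{Z})$ then yields the short exact sequence
$$0 \longrightarrow \pi_{2}(M)/\operatorname{im}(\tau) \longrightarrow H_{2}(M;\mathbb{Z}) \longrightarrow H_{2}(\pi_{1}(M);\mathbb{Z}) \longrightarrow 0,$$
which, spliced with $\tau$, is precisely the claimed four-term exact sequence. The delicate step — and the main obstacle, in my view — is to identify the middle map with the Hurewicz homomorphism $\mathfrak{h}$. I would deduce this from the standard naturality diagram: the Hurewicz isomorphism for $\widetilde{M}$ (which is an iso by the simply-connected Hurewicz theorem), followed by the map $H_{2}(\widetilde{M};\mathbb{Z}) \to H_{2}(M;\mathbb{Z})$ induced by the covering, agrees with $\mathfrak{h}$ for $M$ by naturality, and this composite is exactly the edge homomorphism $E^{2}_{0,2} \twoheadrightarrow E^{\infty}_{0,2} \hookrightarrow H_{2}(M;\mathbb{Z})$ of the Serre spectral sequence, which is the middle map produced above.
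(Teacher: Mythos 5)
The paper offers no proof of this proposition---it is quoted from McCleary's book---and your derivation via the Serre (equivalently, Cartan--Leray) spectral sequence of $\widetilde{M}\to M\to K(\pi_{1}(M),1)$ is correct and is precisely the standard argument behind that citation, including the two points that actually need care: the trivial-action hypothesis identifying $E^{2}_{0,2}=H_{0}(\pi_{1}(M);H_{2}(\widetilde{M}))$ with $\pi_{2}(M)$ via Hurewicz for $\widetilde{M}$, and the identification of the edge homomorphism with $\mathfrak{h}$ by naturality of the Hurewicz map under the covering projection. Nothing to add.
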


\begin{theorem}\label{main}
For any finite birack $X,$ $\pi_{2}(|\mathcal{X}|)$ is finitely generated. If $X$ is a finite biquandle, then $\pi_{2}(BX)$ is finitely generated.
\end{theorem}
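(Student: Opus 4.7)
The plan is to apply Proposition \ref{exact sequence} with $M = |\mathcal{X}|$: the trivial-action hypothesis is furnished by Lemma \ref{trivial action}, and $\pi_{1}(|\mathcal{X}|) \cong AS(X,R)$ as observed just before Lemma \ref{lemma 3.5}. The resulting four-term exact sequence
$$H_{3}(AS(X);\mathbb{Z}) \xrightarrow{\tau} \pi_{2}(|\mathcal{X}|) \xrightarrow{\mathfrak{h}} H_{2}(|\mathcal{X}|;\mathbb{Z}) \to H_{2}(AS(X);\mathbb{Z}) \to 0$$
exhibits $\pi_{2}(|\mathcal{X}|)$ as an extension of a subgroup of $H_{2}(|\mathcal{X}|;\mathbb{Z})$ by a quotient of $H_{3}(AS(X);\mathbb{Z})$, so it suffices to show both integer homology groups on the outside are finitely generated when $X$ is finite.

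Finite generation of $H_{2}(|\mathcal{X}|;\mathbb{Z}) = H_{2}^{YB}(X;\mathbb{Z})$ is immediate: by the definition in Section \ref{section2}, each chain group $C_{n}^{YB}(X) = \mathbb{Z}[X^{n}]$ has finite rank when $X$ is finite. For $H_{3}(AS(X);\mathbb{Z})$, I plan to invoke Lemma \ref{lemma 3.5} to obtain an extension $1 \to K \to AS(X) \to Q \to 1$ with $K$ finitely generated abelian and $Q$ finite, and then run the Lyndon-Hochschild-Serre spectral sequence
$$E_{2}^{p,q} = H_{p}(Q;H_{q}(K;\mathbb{Z})) \Longrightarrow H_{p+q}(AS(X);\mathbb{Z}).$$
Each $H_{q}(K;\mathbb{Z})$ is finitely generated because $K$ is a finitely generated abelian group (Kunneth together with the integral homology of $\mathbb{Z}$ and of finite cyclic groups), and for the finite group $Q$ the functor $H_{p}(Q;-)$ sends every finitely generated $\mathbb{Z}Q$-module to a finitely generated abelian group, e.g.\ because the bar resolution has finite rank in each degree over $\mathbb{Z}Q$. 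Thus each $E_{2}^{p,q}$, and so each $E_{\infty}^{p,q}$, with $p+q=3$ is finitely generated, giving $H_{3}(AS(X);\mathbb{Z})$ a finite filtration with finitely generated quotients. Feeding both facts into the four-term sequence concludes the birack case.

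For the biquandle statement, the inductive construction in Section \ref{section3} builds $BX$ from $|\mathcal{X}|$ by attaching only cells of dimension $\geq 3$. A standard CW argument then shows that the inclusion induces a surjection $\pi_{2}(|\mathcal{X}|) \twoheadrightarrow \pi_{2}(BX)$, so the biquandle conclusion follows from the birack one. The step I expect to require the most care is the spectral sequence analysis for $H_{3}(AS(X);\mathbb{Z})$: the $E_{2}$-entries must be checked to be finitely generated as abelian groups, which means verifying both that $H_{q}(K;\mathbb{Z})$ is a finitely generated $\mathbb{Z}Q$-module for each $q$ and that finite-group homology with finitely generated coefficients remains finitely generated, and then that finite generation passes to the abutment through the filtration.
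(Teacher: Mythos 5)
Your proposal is correct and follows essentially the same route as the paper: the exact sequence from Proposition \ref{exact sequence} (via Lemma \ref{trivial action}), finite generation of $H_{2}(|\mathcal{X}|;\mathbb{Z})$ from finiteness of the $2$-skeleton, the Lyndon--Hochschild--Serre spectral sequence applied to the extension from Lemma \ref{lemma 3.5} to control $H_{3}(AS(X);\mathbb{Z})$, and the surjection $\pi_{2}(|\mathcal{X}|)\twoheadrightarrow\pi_{2}(BX)$ coming from the fact that $BX$ is built from $|\mathcal{X}|$ by attaching cells of dimension at least $3$. The extra detail you supply on why the $E^{2}$-terms are finitely generated is a fair elaboration of what the paper leaves implicit.
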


\begin{proof}
$(i)$ Since $X$ is a birack, the sequence
$$H_{3}(\pi_{1}(\widetilde{M}_{X});\mathbb{Z})\xrightarrow{\tau} \pi_{2}(\widetilde{M}_{X}) \xrightarrow{\mathfrak{h}} H_{2}(\widetilde{M}_{X};\mathbb{Z})$$
is exact by Lemma \ref{trivialaction} and Proposition \ref{exact sequence}.

$(ii)$ Since $X$ is finite, its birack space $|\mathcal{X}|$ contains only finitely many $2$-cells, and so does $\widetilde{M}_{X}$ as $\widetilde{M}_{X} \rightarrow M_{X}=|\mathcal{X}|$ is a finite covering. Thus, $H_{2}(\widetilde{M}_{X};\mathbb{Z})$ is finitely generated.

$(iii)$ Let us consider the homomorphism $\phi:AS(X)\rightarrow Sym(\overline{X})\times Sym(\overline{X})^{op}$ defined in Lemma \ref{lemma 3.5} and its restriction $\widetilde{\phi} : \textrm{Ker} \xi_{\overline{X}} \rightarrow Sym(\overline{X})\times Sym(\overline{X})^{op}.$

 Consider the canonical short exact sequence
$$0\rightarrow \textrm{Ker}(\widetilde{\phi})\rightarrow \textrm{Ker} \xi_{\overline{X}}\rightarrow \textrm{Im}(\widetilde{\phi})\rightarrow 0.$$
Then the Lyndon-Hochschild-Serre spectral sequence of the group extension above takes the form
$$E_{p,q}^{2} \cong H_{p}(\textrm{Im}(\widetilde{\phi});H_{q}(\textrm{Ker}(\widetilde{\phi});\mathbb{Z})) \Rightarrow H_{p+q}(\textrm{Ker} \xi_{\overline{X}};\mathbb{Z}).$$
Note that $\textrm{Ker}(\phi)$ is finitely generated and abelian by Lemma \ref{lemma 3.5}$(1).$ Since $\textrm{Ker}(\widetilde{\phi}) < \textrm{Ker}(\phi),$ $\textrm{Ker}(\widetilde{\phi})$ is finitely generated and abelian.
Then $H_{q}(\textrm{Ker}(\widetilde{\phi});\mathbb{Z})$ is finitely generated, and so is $H_{p}(\textrm{Im}(\widetilde{\phi});H_{q}(\textrm{Ker}(\widetilde{\phi});\mathbb{Z}))$ because $\textrm{Im}(\widetilde{\phi}) < \textrm{Im}(\phi)$ is finite by Lemma \ref{lemma 3.5}$(2),$ i.e., $H_{*}(\textrm{Ker} \xi_{\overline{X}};\mathbb{Z})$ is finitely generated. Accordingly, $H_{*}(\pi_{1}(\widetilde{M}_{X});\mathbb{Z})$ is finitely generated since $\textrm{Ker} \xi_{\overline{X}}$ is isomorphic to $\pi_{1}(\widetilde{M}_{X}).$

Therefore, $\pi_{2}(|\mathcal{X}|) = \pi_{2}(M_{X}) \cong \pi_{2}(\widetilde{M}_{X})$ is finitely generated by $(i),$ $(ii),$ and $(iii).$

Moreover, for a biquandle $X,$ the inclusion map $|\mathcal{X}| \hookrightarrow BX$ induces the epimorphism $\pi_{2}(|\mathcal{X}|) \rightarrow \pi_{2}(BX).$ Hence, $\pi_{2}(BX)$ is also finitely generated if $X$ is finite.
\end{proof}

\section*{Acknowledgements}
The authors are grateful to Katsumi Ishikawa and Kokoro Tanaka for valuable conversations on biquandle spaces. The authors would like to thank reviewers for their constructive comments and suggestions, which helped us improve the quality of the paper.\\
The work of Xiao Wang was supported by the National Natural Science Foundation of China (No.11901229).
The work of Seung Yeop Yang was supported by the National Research Foundation of Korea(NRF) grant funded by the Korean government(MSIT) (No. 2019R1C1C1007402 and No. 2022R1A5A1033624).

\end{document}